\DeclareMathOperator{\sgn}{sgn}
\newcommand{\E}{\mathscr{E}}
\newcommand{\D}{\mathscr{D}}
\newcommand{\schwartz}{\mathcal{S}}
\DeclareMathOperator{\proj}{proj}
\DeclareMathOperator{\elliptic}{ell}
\newcommand{\loc}{\text{loc}}
\newcommand{\hilb}{\mathcal{H}}
\theoremstyle{theorem}
\newtheorem*{maintheorem}{Theorem}
\newcommand{\C}{\mathsf{A}} 
\title[Wave equation on a rotating cosmic string background]{Mode solutions to the wave equation on a rotating cosmic string background}
\author{Katrina Morgan and Jared Wunsch}
\address{Department of Mathematics\\Northwestern University\\Evanston
  IL 60208\\USA}
\email{katrina.morgan@northwestern.edu}
\email{jwunsch@math.northwestern.edu}
\date{\today}
\begin{document}
\thanks{The authors are grateful to Angel Carrillo, Kevin Payne, and
  Andr\'as Vasy for helpful conversations, {as well as to an anonymous
  referee for improving the exposition}.  KM was partly supported by NSF Postdoctoral
  Fellowship DMS--2002132.   JW was partially supported
by Simons Foundation grant 631302, NSF grant DMS--2054424, and a
Simons Fellowship.}

\begin{abstract}
  A static rotating cosmic string metric is singular along a timelike
  line and fails to be globally hyperbolic; these features make it
  difficult to solve the wave equation by conventional energy methods.
  Working on a single angular mode at a time, we use microlocal
  methods to construct forward parametrices for wave and Klein--Gordon
equations on such backgrounds.
  \end{abstract}

\maketitle
\section{Introduction}
In this note we construct a semi-global forward parametrix for
mode solutions to the wave equation on a rotating cosmic string background.  Cosmic strings, introduced by Kibble
\cite{Ki:76}, are solutions to the Einstein equations that have
topological defects along one-dimensional (``string'') structures.
They may or may not be a feature of real cosmology
\cite{LI:21}.  The simplest cosmic string solutions, corresponding to
a single, nonrotating string in equilibrium, may be viewed
either as singular at $(x_1,x_2)=0,\ (x_3, t) \in \RR$ in $3+1$ dimensions or, reducing along
an axis of symmetry, as singular at $(x_1,x_2)=0,\ t \in \RR$ in $2+1$
dimensions.  The latter solutions are simply static metrics whose spatial slices are flat 2d
cones.  \emph{Rotating} cosmic string solutions, by
contrast, have a singularity with an authentically Lorentzian
character, given in the static setting \cite[Equation 4.17]{DeJaTh:84} by 
the metric (in cylindrical coordinates in $\RR_t \times \RR^2$)
\begin{equation}\label{metric}
g=(dr^2 + r^2 d\varphi^2)-(dt^2-2\C \, dt \, d\varphi+ \C^2d\varphi^2).
\end{equation}
{These are solutions, introduced by Deser--Jackiw--'t Hooft \cite{DeJaTh:84}, to the Einstein equations corresponding to a
one-dimensional rotating source with zero mass but with nonzero angular
  momentum; here $\C=-4GJ$ where $G$ is the gravitational constant and
  $J$ the angular momentum.}
Owing to their flatness, which can be seen \emph{locally} by a change of
coordinates reducing to Minkowski space, these metrics are manifestly singular solutions to the Einstein equations with
vanishing cosmological constant.
Among the interesting features of the rotating cosmic string metric
(dubbed a ``cosmon'' in \cite{DeJaTh:84}) are the singularity at
$r=0$ and the causality violation entailed by the existence of closed
timelike curves such as $$(t_0, r_0, \varphi=s)\colon s \in [0,
2\pi]$$
for $r_0 <\C.$

One might suppose that such a serious causality violation as exhibited
by the metric \eqref{metric} should be disastrous for the
well-posedness of the wave equation on such a background, and
certainly it does cause great difficulties for conventional energy
methods.   (Note that the conserved energy associated to
$t$-translation invariance ceases to be positive when $r<\C$.)
A number of positive results on the behavior of the wave equation on
causality-violating spacetimes similar to the one we study have been obtained in the
work of Bachelot \cite{Ba:02}, however.\footnote{Our metric
  essentially fits into
  the framework of \emph{Papepetrou metrics} considered by Bachelot,
  but the singularity at $r=0$ is a novel feature here, as is the
  focus on causal solutions, rather than scattering theory.}
  Motivated by \cite{Ba:02}, in this note we
pursue the question of existence of \emph{forward} solutions to the
wave equation, where we specify a compactly supported inhomogeneity,
and try to solve in forward time.  The wave operator in this spacetime
is given by
$$
  \big(1-\frac{\C^2}{r^2}\big)\pa_t^2 - \frac 1{r^{2}}
           (r\pa_r)^2 - \frac{2\C }{r^2}\pa_\varphi \pa_t-\frac{1}{r^2}\pa_\varphi^2.
$$
Here we specialize to
a single angular mode solution $e^{i k \varphi}u,$ which leaves us with the 1+1 dimensional operator
\begin{align*}
  \Box_k &= \big(1-\frac{\C^2}{r^2}\big)\pa_t^2 - \frac 1{r^{2}}
           (r\pa_r)^2 - \frac{2\C ik}{r^2} \pa_t+\frac{k^2}{r^2}\\
  &= -\frac 1{r^2} (\C\pa_t+ik)^2 +\pa_t^2 - \pa_r^2 - \frac 1r \pa_r.
\end{align*}
The operator $\Box_k$ thus changes type from hyperbolic in $r>\C$ to
elliptic in $r<\C.$ Near this interface, the equation is in fact of
\emph{Tricomi type}, with the added difficulty of a singularity at
$r=0.$ We seek a forward solution operator modulo smoothing terms.
One might hope for better: a solution to $\Box u=\delta_q$ \emph{supported} only in the
forward light cone emanating from a point $q$ in the hyperbolic
region, but this is ruled out by Lemma~\ref{lemma:nonull}
below (cf.\ \cite[Theorem 3.5]{Ba:02}): the support of the solution must
extend throughout the elliptic region.

Thus we rely on microlocal methods, in the spirit of the work of
Payne \cite{Pa:98} on equations of Tricomi type, which yield results
on singularities of solutions without actually constraining their
supports.  \emph{Our main result (fully stated in
  Section~\ref{section:function} below) is the existence of a forward parametrix for
the equation $(\Box_k+m^2) u=f:$} in particular, we show that there is an
exact solution to this equation whose wavefront set is contained in
the forward-in-time bicharacteristic flowout of $\WF f$ {inside the
characteristic set (i.e., the light cone), together with $\WF f$ itself.}  (Note that we
have additionally allowed a nonnegative mass term $m^2,$ and thus consider the
more general Klein--Gordon equation.)
By contrast we remark that the extensive treatment of solutions of the
Tricomi equation in the gas
dynamics literature (see e.g.\ \cite{Mo:04}) tends to emphasize
propagating data from either characteristic curves in the hyperbolic
region (Tricomi problem) or from noncharacteristic surfaces such as
would be locally given in our setup by $r=\text{constant}$ (Frankl's
problem).\footnote{It is claimed in
  \cite{BaNeGe:03} that there exists a fundamental solution to the
  Tricomi equation that is
  \emph{supported} in what the authors call Region
  III, which corresponds to our forward flowout, but the apparent
  contradiction with our results seems to be addressed by
 the erratum
  \cite{BaNeGe:03a}.}

We confess that dealing with single mode solutions, as we do here,
essentially sidesteps the worst difficulties of causality violation:
at high energy, the solutions we study have zero
angular momentum (the angular momentum $k$ is fixed, while the duals
to $t$ and $r$ become infinite). As we will see below, the
associated null-geodesic flow is thus well behaved and $t$ is monotone, with the slight caveat
that the null geodesics do have singularities at $r=\C.$ (The null
bicharacteristics, corresponding to lifts of the geodesics to the
cotangent bundle, remain nonsingular, in any event.)  We intend to treat the
full propagation of singularities for the wave equation on cosmic
string backgrounds (i.e., not just for mode solutions) in a subsequent
paper; this will entail a much more technical analysis of propagation of singularities
through the string at $r=0.$

\section{Function spaces and mapping properties}\label{section:function}

As we will be working on mode solutions, we could restrict our
attention to functions defined on the space $[0,\infty)_r \times
\RR_t,$ but this is potentially confusing owing to the artificial
boundary at $r=0$ and the volume form $r \,dr \, dt.$  Hence we
will instead deal explicitly with $k$-equivariant functions on $\RR^3 =\RR_t
\times \RR^2_x.$
To this end, we define adapted Sobolev spaces for our problem.  Let
$\norm{\bullet}$ denote $L^2$ norm of a function on the spacetime
$\RR^3.$

\begin{definition}
Let $\D$ denote the space of test
functions $\mathcal{C}_c^\infty(\RR_t\times (\RR^2_x \backslash \{0\}));$ let
$\D'$ denote the dual space and let $\E'$ denote the elements of $\D'$
that are compactly
supported in $\RR^3.$  Let $\D(U)$ denote those test function
supported in $U.$ Let $\schwartz$ denote Schwartz
functions supported in $x \neq 0$ and $\schwartz'$ their dual.  For
$\mathcal{X}$ any of the above spaces of distributions, we let
$\mathcal{X}_k$ denote the subspace of distributions that are annihilated by $\pa_\varphi-ik.$
\end{definition}

We may now define Hilbert spaces adapted to our problem.
\begin{definition}
Fix $k \in \ZZ.$  Let $\hilb_k^1$ be the closure of the the $S^1_\varphi$-equivariant
  test functions $\D_k$ with respect to the squared norm
  $$
\norm{\bullet}_{\hilb_k^1}^2= \norm{\bullet}^2+\norm{\pa_t \bullet}^2+ \norm{\pa_r \bullet}^2+\norm{r^{-1}(\C \pa_t+ik) \bullet}^2.
$$
Let $\hilb_k^{-1}$ denote the dual space with respect to the  $L^2$ inner product. 
\end{definition}
\begin{remark}
Away from $r=0,$ $\hilb_k^1$ is just equivariant functions in
$H^1;$ at $r=0,$ though, membership in this space entails subtly different estimates than
$H^1$ regularity; for instance, smooth compactly supported functions
of $(t,r)$ times $e^{ik\varphi}$ are not in $\hilb_k^1.$

We can (and will) identify $\hilb_k^{-1}$ with a space of equivariant distributions.
\end{remark}



\begin{lemma}\label{lemma:compact}
For any $K$ compact and $S_\varphi^1$-invariant, the inclusion 
$$
\hilb_k^1\cap \E'(K) \hookrightarrow L^2(K)
$$
is compact.
\end{lemma}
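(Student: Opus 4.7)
The plan is to reduce to standard three-dimensional Rellich--Kondrachov compactness on the complement of a small tubular neighborhood of the axis $r=0$, and to prove separately that the tail $\norm{u}_{L^2(\{r<\epsilon\})}$ is controlled uniformly by $\norm{u}_{\hilb_k^1}$ with a factor tending to zero as $\epsilon\to 0$. Given a bounded sequence $\{u_n\}\subset\hilb_k^1\cap\E'(K)$, on any set $K\cap\{r\geq\epsilon\}$ the equivariance $\pa_\varphi u_n=ik u_n$ combined with $r^{-1}\leq 1/\epsilon$ converts the $\hilb_k^1$ bound into a uniform bound in $H^1(K\cap\{r\geq\epsilon\})$; classical Rellich--Kondrachov together with a diagonal extraction over $\epsilon_m\downarrow 0$ then produces a subsequence converging in $L^2$ on each $K\cap\{r\geq\epsilon_m\}$. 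Everything then reduces to an estimate of the form
\[
\norm{u}_{L^2(\{r<\epsilon\})}^2 \leq C(\epsilon)\,\norm{u}_{\hilb_k^1}^2, \qquad C(\epsilon)\to 0 \text{ as }\epsilon\to 0,
\]
for all $u\in\hilb_k^1\cap\E'(K)$; applied to differences $u_n-u_p$, this converts the Rellich-type subsequence into a Cauchy sequence in $L^2(K)$.

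To prove the displayed estimate, I use Fourier analysis in $t$. Writing $u=v(t,r)e^{ik\varphi}$ and denoting the partial Fourier transform of $v$ in $t$ by $\hat v(\tau,r)$, Plancherel identifies the two quadratic expressions relevant for us:
\[
\int (\C\tau+k)^2|\hat v|^2\, r^{-1}\, dr\, d\tau \;\leq\; C\norm{u}_{\hilb_k^1}^2, \qquad
\int |\hat v|^2\, r\, dr\, d\tau \;\leq\; C\norm{u}_{\hilb_k^1}^2.
\]
I then split the $\tau$-integration into a \emph{non-resonant region} $\{|\C\tau+k|\geq\delta\}$ and a \emph{resonant band} $\{|\C\tau+k|<\delta\}$, with $\delta>0$ a parameter to be optimized at the end. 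On the non-resonant region, the elementary inequality $r\leq\epsilon^2 r^{-1}$ for $r<\epsilon$, combined with the lower bound $(\C\tau+k)^2\geq\delta^2$, bounds the contribution by $C\delta^{-2}\epsilon^2\norm{u}_{\hilb_k^1}^2$.

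The main obstacle, as one would expect, is the resonant band, where the operator $\C\pa_t+ik$ degenerates and no $r^{-1}$-gain is available from the $\hilb_k^1$ norm alone. I bypass this by exploiting the compact support of $u$ in $t$: since $u\in\E'(K)$ with $K$ compact, $\norm{tu}_{L^2}\leq C_K\norm{u}_{L^2}$, which via Fourier translates into $\norm{\pa_\tau\hat u}_{L^2(\RR_\tau;\,L^2(\RR^2_x))}\leq C_K\norm{u}_{L^2}$. Hence $\hat u\in H^1(\RR_\tau;L^2(\RR^2_x))$, and one-dimensional Sobolev embedding $H^1\hookrightarrow L^\infty$ supplies
\[
\sup_\tau\int |\hat v(\tau,r)|^2\, r\, dr \;\leq\; C_K\,\norm{u}_{\hilb_k^1}^2.
\]
Integrating this pointwise bound over the resonant band, whose measure is $2\delta/\C$, contributes $C\delta\,\norm{u}_{\hilb_k^1}^2$ to the total.

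Adding the two contributions yields $\norm{u}_{L^2(\{r<\epsilon\})}^2\leq C(\delta^{-2}\epsilon^2+\delta)\norm{u}_{\hilb_k^1}^2$, and the optimal choice $\delta\sim\epsilon^{2/3}$ produces the required bound $\norm{u}_{L^2(\{r<\epsilon\})}^2\leq C\epsilon^{2/3}\norm{u}_{\hilb_k^1}^2$. Combined with the Rellich extraction on $K\cap\{r\geq\epsilon_m\}$, this furnishes a Cauchy subsequence in $L^2(K)$ and completes the proof.
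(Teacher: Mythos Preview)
Your argument is correct, but it is considerably more elaborate than the paper's. The paper observes that if $v_j \coloneqq e^{-ik\varphi}u_j$, then $v_j$ is rotation-invariant, hence $r^{-1}\pa_\varphi v_j=0$, and therefore
\[
\norm{v_j}_{H^1(\RR^3)}^2 = \norm{v_j}^2+\norm{\pa_t v_j}^2+\norm{\pa_r v_j}^2+\norm{r^{-1}\pa_\varphi v_j}^2
= \norm{u_j}^2+\norm{\pa_t u_j}^2+\norm{\pa_r u_j}^2 \le \norm{u_j}_{\hilb_k^1}^2.
\]
So the $v_j$ are bounded in ordinary $H^1(\RR^3)$ with support in $K$, and classical Rellich--Kondrachov finishes immediately. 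The single gauge transformation $u\mapsto e^{-ik\varphi}u$ makes the axis $r=0$ a non-issue: no tail estimate, no Fourier splitting, no diagonal extraction is needed.

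Your route instead keeps the equivariant $u$ and confronts the axis directly, proving a quantitative tail bound $\norm{u}_{L^2(r<\epsilon)}^2\lesssim \epsilon^{2/3}\norm{u}_{\hilb_k^1}^2$ via a resonant/non-resonant split in the $t$-Fourier variable, with the clever use of compact $t$-support to get $L^\infty_\tau$ control on the resonant band. This is sound, and the tail estimate is of some independent interest. But for the compactness lemma itself it is much more work for no additional payoff: the paper's gauge trick exploits that the $\hilb_k^1$ norm already dominates the three derivatives $\pa_t,\pa_r,r^{-1}\pa_\varphi$ (the last vanishing on $e^{-ik\varphi}u$) that make up the Euclidean gradient, so nothing special happens at $r=0$ after the twist.
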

\begin{proof}
  We remark that elements of the space
  $$
e^{-ik\varphi} \hilb_k^1=\{e^{-ik\varphi} u: u \in \hilb_k^1\}
$$
are rotation-invariant in $\varphi,$ hence annihilated by $\pa_\varphi$
or even by $r^{-1} \pa_\varphi$.  Thus if $u_j$ are a sequence of
elements in the unit ball in $\hilb_k^1,$ supported in $K$ then
$$
v_j \equiv e^{-ik\varphi} u_j
$$
enjoy the same support property and satisfy
$$
\norm{v_j}^2+ \norm{\pa_t v_j}^2+ \norm{\pa_r v_j}^2+
\norm{r^{-1}\pa_\varphi v_j}^2 \leq 1,
$$
where the last term on the LHS is of course zero.  Recognizing that
the LHS is now the usual $H^1$ norm, we see that $L^2$-convergence of
a subsequence follows
from compact embedding of $H^1 \cap\E'(K)$ in $L^2.$
  \end{proof}

In discussing weak solutions to $(\Box_k +m^2) u=0$ we must be careful about
behavior near $r=0,$ since in fact $\Box_k$ does not a priori map
even $\mathcal{C}_c^\infty(\RR^3)$ to distributions,
owing to the singularity at $r=0.$  Hence in discussing distributional
solutions to $(\Box_k +m^2)u=f$ we will mean weak solutions in the following
sense.
\begin{definition}\label{definition:weaksol}
For $ u \in L_k^2$, and $U \subset \RR^3$ open and $S^1_\varphi$-invariant, we define $(\Box_k+m^2) u=f$
on $U$ if $$\ang{u,(\Box_k+m^2) \phi}=\ang{f,\phi}$$ for all test
functions $$
\phi \in \D_k(U).
$$
\end{definition}

(We will use the same definition of weak solution in dealing with the modified
operators $P,P^*$ defined below.)
  
  With a notion of solutions and appropriate Sobolev spaces in hand,
  we can now state our main theorem.  {For $q \in T^*\RR^3,$ let $\Phi^s(q)$ denote the
    Hamilton flow (with
  Hamiltonian given by the
  principal symbol of $\Box_k+m^2$) with parameter $s$ starting at $q;$ note that the
    $t$ variable may be increasing or decreasing along the flow
    according to the sign of its dual variable.
For a set
  $\Omega \subset T^*\RR^3$ let $$\Phi_+(\Omega)=\bigcup_{s \in\RR}
  \{\Phi^s(q): q \in \Omega,\ t(\Phi^s(q))\geq t(q)\}$$ denote the
  \emph{forward-in-time} flowout; projected to the base, this is a
  forward-in-time motion along
  radial geodesics for the cosmic string metric.
Finally, let $\Sigma$ denote the
  characteristic set of $\Box_k+m^2,$ intersected with that of
  $\pa_\varphi-ik,$ i.e., the radial part of the light cone.  (See
  Section~\ref{section:propagation} below for details on the
  Hamiltonian dynamics.)}
\begin{maintheorem}
  Given $m \in \RR,$ $R_0 >0,$ an $S^1_\varphi$-invariant compact set $K \subset
  \RR^3,$ and $f \in \hilb_k^{-1}$ with $\supp f \subset \{r<R_0\},$ there
  exists $u \in L_k^2(K)$ such that
  $$
(\Box_k+m^2) u=f\ \text{ on } K^\circ
$$
{and such that $\WF u\backslash \WF f\subset \Phi_+(\WF f \cap \Sigma).$}

If, additionally, $f \in L_k^2(\RR^3),$ then we further conclude that
$$
u \in L_k^2(K) \cap H^1_{\loc}(K^\circ \backslash\{r=0\}).
$$

The forward solution $u$ is unique modulo an element of $L_k^2 \cap
\CI(K^\circ \backslash \{r=0\}).$
\end{maintheorem}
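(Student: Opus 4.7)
The plan is to combine functional-analytic duality with microlocal propagation of singularities. Since the operator changes type at $r = \C$ and the natural energy associated with $t$-translation fails to be positive there, standard energy methods cannot produce forward solutions directly. Instead, I would first introduce a modified operator $P = \Box_k + m^2 + iQ$, where $Q$ is a nonnegative complex-absorbing term supported in a region $\{t < t_0\}$ for some $t_0$ well below $\supp f$. This absorption kills microlocal data incoming from the past and renders the problem for $P$ forward well-posed, while $P$ agrees with $\Box_k + m^2$ on $K^\circ$, so a forward solution for $P$ will solve the original equation where desired.

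The existence of a weak solution $u \in L_k^2$ to $Pu = f$ would rest on an a priori estimate of the form
\[
\norm{\phi}_{\hilb_k^1} \lesssim \norm{P^*\phi}_{\hilb_k^{-1}}
\]
for $\phi \in \D_k$ with appropriate support. I would derive this by pairing $P^*\phi$ with a positive commutant built from a modification of $\pa_t$ adapted to the sign change of $(1 - \C^2/r^2)$ at $r = \C$. On a first pass the indefinite bulk terms will only yield the weaker estimate
\[
\norm{\phi}_{\hilb_k^1} \lesssim \norm{P^*\phi}_{\hilb_k^{-1}} + \norm{\phi}_{L^2},
\]
which I would then upgrade using Lemma~\ref{lemma:compact} (compactness of $\hilb_k^1 \cap \E'(K) \hookrightarrow L^2$) together with injectivity of $P^*$; this injectivity is itself the content of backward propagation of singularities, since the absorbing term $iQ$ eliminates any microlocal solutions coming from $t \to -\infty$. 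Riesz representation on the range of $P^*$ then produces the desired $u \in L_k^2(K)$.

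With $u \in L_k^2$ in hand, the wavefront statement $\WF u \setminus \WF f \subset \Phi_+(\WF f \cap \Sigma)$ would follow from H\"ormander real-principal-type propagation along the null-bicharacteristic flow, combined with microlocal elliptic regularity in $r < \C$, where $\Box_k + m^2$ is elliptic and hence cannot produce new singularities beyond those in $\WF f$. I expect the main technical obstacle to lie at the transition surface $r = \C$, where the principal symbol degenerates even though the lifted bicharacteristics remain smooth in $T^*\RR^3$, and at the string $r = 0$, where the coefficient singularity forces one to work in the tailored space $\hilb_k^1$ rather than in $H^1$. The $H^1_{\loc}$ upgrade when $f \in L^2_k$ would come from elliptic estimates in $\{r < \C\}$ together with a standard $\pa_t$-multiplier energy estimate on compact subsets of $K^\circ \setminus \{r = 0\}$. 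Finally, uniqueness modulo smooth functions is a consequence of the wavefront statement: the difference of two forward solutions solves the homogeneous equation with empty source wavefront set, and forward propagation together with the absorbing construction forces its wavefront set to be empty away from $\{r = 0\}$.
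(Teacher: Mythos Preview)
Your overall architecture---modify the operator by complex absorption, prove an a priori estimate for the adjoint, upgrade it via compactness, then apply Hahn--Banach/Riesz and read off the wavefront statement from propagation---matches the paper's. But two choices diverge from the paper and one of them produces a genuine gap.

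The paper does \emph{not} place the absorber in a time-slab $\{t<t_0\}$; it places a second-order pseudodifferential absorber $W$ in $\{r>R\}$ for some $R>R_0$, microlocalized to the incoming component $\Sigma_-$ of the characteristic set. This is tailored to the hypothesis $\supp f\subset\{r<R_0\}$, which carries no constraint in $t$; a time-slab absorber could overlap $\supp f$. Relatedly, the estimate the paper obtains is $\norm{\phi}_{\hilb_k^1}\lesssim \norm{P^*\phi}_{L^2}+\norm{\phi}_{L^2}$, with $L^2$ rather than $\hilb_k^{-1}$ on the right: real-principal-type propagation in the hyperbolic region gains only one derivative, so your stated estimate with $\hilb_k^{-1}$ is too strong there. (In the elliptic region near $r=0$ the paper does get the $\hilb_k^{-1}$ norm, via Lemma~\ref{lemma:elliptic}, but the global estimate is limited by the hyperbolic part.)

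The substantive gap is your claim that injectivity of $P^*$ ``is itself the content of backward propagation of singularities.'' Propagation shows that elements of $N(P^*)$ are \emph{smooth}, not that they vanish. The paper never asserts injectivity; it shows instead that $N(P^*)$ is finite-dimensional (by Lemma~\ref{lemma:compact}) and, via the unique-continuation results Lemma~\ref{lemma:ellipticvanishing} and Lemma~\ref{lemma:nonull}, that every element of $N(P^*)$ is supported in $\{r\geq R\}$. Since $\supp f\subset\{r<R_0\}\subset\{r<R\}$, one gets $f\perp N(P^*)$ for free, and the duality argument goes through on $N(P^*)^\perp$. If you insist on a time-slab absorber, you would need a separate unique-continuation argument to pass from smoothness to vanishing, and that argument would still have to route through the elliptic region $\{r<\C\}$ and Lemma~\ref{lemma:ellipticvanishing}; propagation alone will not supply it.
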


We begin with a unique continuation theorem that rules out solutions
that are supported in the hyperbolic region.
\begin{lemma}\label{lemma:ellipticvanishing}
Let $u \in \schwartz_k'(\RR^3)$ and assume $(\Box_k+m^2)u=0$ in $\{r \in
I\}$ where $I\subset (0,\infty)$ is an open interval containing $r=\C.$ If $u(t,r)=0$ for
$r \in (0, \C) \cap I$  then $u \equiv 0$ on $\{r\in I\}.$
\end{lemma}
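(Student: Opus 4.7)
The plan is to invoke Holmgren's uniqueness theorem for distributional solutions of a linear PDE with real-analytic coefficients, combined with a continuity argument sweeping in the radial variable $r$. The key geometric input is that $\Box_k + m^2$ has real-analytic coefficients on $\{r > 0\}$, and that every hypersurface $\{r = r_0\}$ with $r_0 > 0$ is non-characteristic: its principal symbol in the $(t,r)$-variables is $(1 - \C^2/r^2)\taue^2 - \xie^2$, which evaluated on the conormal $(\taue,\xie) = (0, 1)$ to $\{r = r_0\}$ gives $-1 \neq 0$. Crucially this remains true at $r = \C$, where $\Box_k$ degenerates in type but the radial hypersurfaces stay non-characteristic since only the coefficient of $\partial_t^2$ vanishes.

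Applying the distributional Holmgren theorem pointwise on $\{r = r_0\}$: if $u$ is a distributional solution of $(\Box_k + m^2) u = 0$ that vanishes on one side of $\{r = r_0\}$ near a given point, then $u$ vanishes in a full neighborhood of that point. By $t$-translation invariance of the operator, the $r$-width of these neighborhoods can be taken uniform in $t_0$, and so vanishing of $u$ on $\{r \in I,\ r < r_0\}$ upgrades to vanishing on $\{r \in I,\ r < r_0 + \varepsilon\}$ for some $\varepsilon > 0$. I would then run a continuity argument: set $A = \{r^* \in I : u \equiv 0 \text{ on } \{r \in I,\ r < r^*\}\}$. By hypothesis $\C \in A$; by its definition $A$ is downward-closed and closed in $I$ (closed under increasing unions of zero sets); and by the preceding step it is open in $I$. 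Since $I$ is connected and $A$ is nonempty, open, and closed, $A = I$, which gives $u \equiv 0$ throughout $\{r \in I\}$.

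The main technical point is the uniform-in-$t$ version of Holmgren, which follows from $t$-translation invariance: the Cauchy--Kovalevskaya radii in the classical proof depend only on the coefficients of the operator and the local geometry of the non-characteristic hypersurface, both of which are $t$-independent in our setting. An alternative route would be to take a partial Fourier transform in $t$, reducing the equation to a family of Bessel-type ODEs in $r$ parametrized by the dual frequency $\taue$, and then apply ODE uniqueness in $r$; this approach is in some ways more explicit but requires a careful treatment of the distributional dependence on $\taue$, which is why the Holmgren-plus-sweeping route above seems preferable.
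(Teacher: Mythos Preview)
Your Holmgren-plus-sweeping argument is correct: the coefficients of $\Box_k+m^2$ are real-analytic on $\{r>0\}$, the hypersurfaces $\{r=r_0\}$ are non-characteristic (only the $\partial_t^2$ coefficient degenerates at $r=\C$), and the $t$-translation invariance gives the uniformity needed to sweep across $I$. The paper, however, takes precisely the alternative route you mention at the end: it applies the partial Fourier transform in $t$, obtaining the ODE family
\[
\Big[-\Big(1-\tfrac{\C^2}{r^2}\Big)\lambda^2-\tfrac{1}{r^2}(r\partial_r)^2+\tfrac{2\C k\lambda}{r^2}+\tfrac{k^2}{r^2}+m^2\Big]\hat u(\lambda,r)=0,
\]
and then handles the distributional dependence on $\lambda$ (the point you flag as delicate) by replacing $\hat u$ with $\hat u * \phi$ for an arbitrary test function $\phi(\lambda)$, yielding a smooth-in-$\lambda$ solution to which Picard--Lindel\"of applies for each fixed $\lambda$. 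Vanishing of $\hat u * \phi$ for all $\phi$ then gives $u=0$.

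The paper's argument is shorter and more elementary, resting only on ODE uniqueness rather than the Cauchy--Kovalevskaya machinery behind Holmgren; the convolution trick disposes of the distributional issue in one line. Your approach, on the other hand, is more robust: it would survive, with the usual lens-shaped-region refinement, if the coefficients were merely analytic and not $t$-independent, whereas the Fourier reduction leans entirely on that symmetry.
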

Note that the same proof as that given here shows that a nontrivial solution to the Tricomi equation $y\pa_x^2+\pa_y^2$ near $y=0$ cannot identically vanish in the elliptic region $y>0.$
\begin{proof}
  Let $\hat u(\lambda,r)$ denote the partial Fourier transform in $t.$  Then
\begin{equation}\label{FTed}
\big[ -\big(1-\frac{\C^2}{r^2}\big) \lambda^2 - \frac 1{r^{2}}
(r\pa_r)^2 + \frac{2\C k\lambda}{r^2}+\frac{k^2}{r^2}+m^2\big] \hat u=0,\ r \in I.
\end{equation}
Now replace $\hat u$ by $\hat u_\phi\equiv u *\phi$ for an arbitrary test function $\phi(\lambda)$ to obtain a smooth (in $\lambda$) solution to the above equation.
The Picard--Lindel\"of theorem applied to the ODE \eqref{FTed} means that for
any fixed $\lambda,$ if $\hat u_\phi(\lambda,r)=0$ for $r \in I \cap (0,\C),$  then it is
identically zero.  Thus, $\hat u_\phi=0$ identically for $r \in I$.  Since $\phi$
was arbitrary, the distribution $u$ must vanish for $r \in I.$\end{proof} 

\section{Propagation of singularities}\label{section:propagation}

We now analyze the pair of operators $(\Box_k+m^2, \pa_\varphi-ik)$ from
the perspective of microlocal analysis.
Let $(\lambda,\xi,\eta)$ denote canonical dual coordinates in
$T^*\RR^3$ to the cylindrical coordinates $(t,r,\varphi).$  The
principal symbol of $\Box_k$ (and likewise of $\Box_k+m^2$) is
$$
\sigma_2(\Box_k) = \frac 1{r^2} \C^2\lambda^2-\lambda^2 +\xi^2,
$$
hence the 
Hamilton vector field of $\sigma_2(\Box_k)$ is
$$
-2\lambda\big(1-\frac{\C^2}{r^2} \big)\pa_t +2 \xi \pa_r+\frac 2{r^3}
\C^2 \lambda^2 \pa_\xi.
$$
Meanwhile the operator $\pa_\varphi-ik,$ on whose nullspace we work,
is globally elliptic except at $\eta=0,$ hence we need only concern
ourselves with this region of phase space.  The system
$(\Box_k+m^2,\pa_\varphi-ik)$ is then elliptic for $r<\C.$

In the following, let $\Sigma$ denote the \emph{joint} characteristic
set of $(\Box_k+m^2,\pa_\varphi-ik),$ hence {the subset of the
complement of the zero-section} given by $\{\sigma_2(\Box_k)=0\}
\cap\{\eta=0\}$.  By standard elliptic regularity, for $u \in\schwartz'_k$,
$$\WF u \subset \WF \big((\Box_k+m^2) u\big)\cup \Sigma,$$ at least over $r>0.$  (We will
develop an elliptic estimate below that is valid down to $r=0.$)
{\begin{remark}
   Our system $(\Box_k+m^2, \pa_\varphi-ik)$ changes type abruptly across
  the hypersurface $\{r=\C\}\subset \RR^3,$ hence the projection to
  the base of $\Sigma$ has a boundary at $r=\C.$  By contrast, upstairs in the cotangent bundle
  $$
\Sigma=\{(r^2-\C^2) \lambda^2=r^2 \xi^2,\ \eta=0\}
$$
is nonetheless a smooth conic submanifold of $T^*\RR^3.$  It is only
the projection to the base that is singular.\end{remark}}

\begin{remark}\label{remark:flow}
For later use, we note that on $\Sigma,$ vanishing of
$\sigma_2(\Box_k)$ gives
$$
\lambda^2(1-\C^2/r^2)= \dot{r}^2/4,
$$
with dot denoting derivative along the Hamilton flow.
  Hence along bicharacteristics with flow parameter $s,$
  $$
r(s)^2= \C^2+(2\lambda s+ \text{const})^2,
$$
i.e.\ $r \to \infty$ (in a monotone fashion) as $s \to \pm \infty.$  Meanwhile, $\dot{t}
=-2\lambda(1-\C^2/r^2)$ yields $\abs{\dot{t}}\geq \abs{\lambda}$ for
$r$ sufficiently large, i.e.\ $t$ is strictly monotone along the flow
as $s \to \pm\infty.$

The integral curves are singular in the base (i.e., $t,r$ variables)
when $r=\C,$ and $t$ is stationary there, since
$dt/dr=-\lambda(r^2\xi)^{-1}(r^2-\C^2).$ But the curves are smooth in the
cosphere bundle: such points are not radial points since $\dot
\xi=2r^{-3} \C^2 \lambda^2 \neq 0.$ (Note that $\lambda
\neq 0$ on $\Sigma$.)
\end{remark}

Given a fixed compact $K \subset \RR^3$ and $R_0 \in\RR$, choose
$R>\max\{\C, R_0\}$ such that $K
\subset \{r<R\}.$  Note that over $r>R,$ the characteristic set
$\Sigma$ separates into four components corresponding to choosing
$\lambda \gtrless 0,$ $\xi \gtrless 0.$  Let $$\Sigma_- \equiv
\Sigma\cap \{r>\C\}
\cap \{\sgn \lambda=\sgn \xi\}.$$
{We now construct {$W \in \Psi^2(\RR^3)$} enjoying the following properties:}
\begin{enumerate}
  \item
For $r>R+1,$ $W$ is elliptic on $\Sigma_-.$
\item
Where $\sigma_2(W) \neq 0,$
  $$
\sgn \sigma_2(W)=-\sgn \lambda.
$$
\item
$\proj_\bullet \supp \kappa(W) \subset \{r>R\},$ where $\proj_\bullet$
is projection to the left or right factor ($\bullet=L$ or $R$) and
$\kappa$ denotes Schwartz kernel.
\item $[\pa_t,W]=[\pa_\varphi,W]=0.$
\end{enumerate}
{To produce such an operator, we begin by choosing $R$ such that $1-\C^2/R^2>9/10.$
Let $\varrho, \psi, \chi$ be smooth 
functions such that
\begin{itemize}
\item $\varrho(s)= 1$ on $(3/4, 5/4)$ and is supported on $(2/3, 4/3).$
  \item $\psi(s)=1$ on $(-1/10, 1/10)$ and is supported on $(-1/5,1/5).$
    \item $\chi(r)$ is supported on $(R,\infty)$ and equals $1$ on $(R+1,\infty).$
  \end{itemize}
Let $w$ denote the
  homogeneous $2$-symbol
  $$
w = -\sgn(\lambda) \lambda^2 \varrho(\xi/\lambda) \psi(\abs{\eta/\lambda})\chi(r).
$$
Let $W_0$ denote the Weyl quantization of $w,$ and let
$$
W=\widetilde \chi W_0 \widetilde \chi
$$
where $\widetilde\chi(r)=1$ on $\supp \chi(r),$ and $\widetilde\chi$ is supported in
$(R,\infty).$  The cutoffs $\widetilde\chi$ enforce the support
properties of the kernel of $W.$  The ellipticity property follows
from the fact that on $\Sigma_-\cap \{r>R\},$  $\xi^2 \in
(9\lambda^2/10,\lambda^2)$ and $\eta=0,$ hence the $\varrho$ and $\psi$ cutoffs
equal $1,$ and $w=-\sgn(\lambda) \lambda^2$ on
this set.}

We will consider solutions to $Pu=f$ for the operator
$$
P \equiv \Box_k+m^2-iW.
$$

\begin{remark}\mbox{}
\begin{enumerate}\item The set $\Sigma_-$ is \emph{incoming} in forward time in the sense that under bicharacteristic flow, $dr/dt<0$ there.
\item
  We will prove a number of preliminary results that hold equally well
for the operators $P$ and $P^*,$ hence we let $P^{(*)}$ denote either
of these operators. 
\item By ellipticity of $(P^{(*)},\pa_\varphi-ik)$ on
  $\{r>R+1\}\cap \Sigma_-,$
  $$(\WF u \backslash \WF P^{(*)} u) \cap
  \{r>R+1\}\cap\Sigma_-=\emptyset$$ for $u \in \schwartz'_k.$\end{enumerate}
\end{remark}

  \begin{lemma}
    The operators $P,P^*$ enjoy the following mapping property:
    $$
P^{(*)}: \hilb_k^1\to \hilb_k^{-1}.
    $$
  \end{lemma}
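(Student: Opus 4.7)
The plan is to establish the boundedness of $P^{(*)}: \hilb_k^1 \to \hilb_k^{-1}$ by proving the bilinear estimate
$$|\langle P^{(*)} u, \phi\rangle| \leq C\|u\|_{\hilb_k^1}\|\phi\|_{\hilb_k^1}$$
for $u, \phi \in \D_k$, and then extending by density, using that $\hilb_k^1$ is by definition the closure of $\D_k$. Since $w$ is real-valued and the cutoffs $\widetilde\chi$ are real, the Weyl quantization $W_0$ and hence $W=\widetilde\chi W_0\widetilde\chi$ are formally self-adjoint, so $P^* = \Box_k + m^2 + iW$. Thus $P$ and $P^*$ share the formally self-adjoint part $\Box_k + m^2$ and differ only by the sign of $iW$, and I can treat them simultaneously.

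For the $\Box_k$ contribution, I would use the decomposition
$$\Box_k = -\frac{1}{r^2}(\C\pa_t + ik)^2 + \pa_t^2 - \frac{1}{r}\pa_r(r\pa_r)$$
and integrate by parts against the volume form $r\,dr\,d\varphi\,dt$. Using $(\C\pa_t+ik)^* = -(\C\pa_t+ik)$, $\pa_t^* = -\pa_t$, and the symmetry of $-r^{-1}\pa_r(r\pa_r)$ with respect to $r\,dr$, one obtains on $\D_k$
$$\langle \Box_k u, \phi\rangle = \langle r^{-1}(\C\pa_t+ik)u,\, r^{-1}(\C\pa_t+ik)\phi\rangle - \langle \pa_t u, \pa_t\phi\rangle + \langle \pa_r u, \pa_r\phi\rangle.$$
By Cauchy--Schwarz, each pairing is bounded by the product of the $\hilb_k^1$ norms, since the four quantities $\|\bullet\|$, $\|\pa_t\bullet\|$, $\|\pa_r\bullet\|$, $\|r^{-1}(\C\pa_t+ik)\bullet\|$ in the definition of $\|\cdot\|_{\hilb_k^1}$ are exactly what the integration by parts produces. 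The mass term $m^2\langle u,\phi\rangle$ is immediately controlled by $\|u\|\|\phi\|$.

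For the $W$ contribution, I exploit the support of its kernel: $\proj_\bullet\supp\kappa(W)\subset\{r>R\}$ with $R>\C$, so $r^{-1}$ is bounded on the support of $\widetilde\chi$. For $u\in\hilb_k^1$, $k$-equivariance gives $r^{-1}\pa_\varphi u = ikr^{-1}u$, so
$$\|\widetilde\chi u\|_{H^1(\RR^3)} \leq C\|u\|_{\hilb_k^1}$$
with $C$ depending on $R$, $k$, and $\|\widetilde\chi\|_{C^1}$. Since $W_0 \in \Psi^2(\RR^3)$ is standardly bounded $H^1\to H^{-1}$,
$$|\langle Wu,\phi\rangle| = |\langle W_0\widetilde\chi u,\widetilde\chi\phi\rangle| \leq \|W_0\widetilde\chi u\|_{H^{-1}}\|\widetilde\chi\phi\|_{H^1} \leq C\|u\|_{\hilb_k^1}\|\phi\|_{\hilb_k^1}.$$

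The only genuine obstacle is the singularity of the coefficient $r^{-1}$ in $\Box_k$ at $r=0$, which is precisely why $\hilb_k^1$ is defined with the weighted term $\|r^{-1}(\C\pa_t+ik)\bullet\|^2$: this weight is exactly matched to what integration by parts extracts from $-r^{-2}(\C\pa_t+ik)^2$, so the singularity is absorbed cleanly. The support cutoffs built into $W$ decouple the pseudodifferential piece from the singularity at $r=0$, reducing its analysis to the standard Sobolev mapping property.
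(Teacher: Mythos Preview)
Your proof is correct and follows essentially the same strategy as the paper: integration by parts for $\Box_k+m^2$ matched exactly to the terms in the $\hilb_k^1$ norm, together with standard $H^1\to H^{-1}$ boundedness for the piece supported away from $r=0$. The only difference is organizational---the paper splits the test function with a radial cutoff (near $r=0$ versus away from $r=0$) rather than splitting the operator into $\Box_k+m^2$ and $W$ as you do---but the substance is identical.
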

  \begin{proof}
    For test functions $\phi,\psi\in \D$ with $\supp \phi \subset \{r<R\},$
    $$
{\ang{P^{(*)}\phi,\psi}} =\ang{r^{-1} (\C\pa_t+ik)\phi, r^{-1}
  (\C\pa_t+ik)\phi}-\ang{\pa_t\phi,\pa_t\psi} +
\ang{\pa_r\phi,\pa_r\psi}+ m^2 \ang{\phi,\psi}.
$$
Applying Cauchy--Schwarz to each term on the RHS, we may estimate it
by a multiple of $\norm{\phi}_{\hilb_k^1} \norm{\psi}_{\hilb_k^1},$ hence
the mapping property follows.  For test functions with support in
$r>R/2$, on the other hand, the estimate simply follows from
boundedness of second order differential operators from $H^1 \to
H^{-1}$ since the norm on $\hilb_k^1$ is equivalent to the $H^1$ norm away from $r=0.$

{Thus, choosing a cutoff $\chi(r)$ equal to $1$ on
$[0, R/2)$ and supported in $[0, 3R/4),$ given
any $\phi \in \D$ we split
$$
\phi=\chi \phi+(1-\chi) \phi.
$$
The operations of multiplication by $\chi,$ $1-\chi$ are
bounded on $\hilb_k^1$ since $\chi=1$ near the origin.  Thus
$$
P^{(*)}\phi=P^{(*)}(\chi\phi)+P^{(*)}((1-\chi) \phi)
$$
is bounded in $\hilb_k^{-1}$ by a multiple of $\norm{\phi}_{\hilb_k^1}$
by applying the foregoing results to $\chi\phi$ and $(1-\chi)\phi$.}
\end{proof}

The virtue of our construction of $W$ is that owing to our choice of
signs for $W,$ regularity {for solutions to the equation $Pu=f$} propagates
\emph{forward} along {null bicharacteristics of $\Re\sigma_2(P)$} in the hyperbolic region for
$\lambda<0$ (since $\sigma_2(W)\geq 0$) and \emph{backward} for
$\lambda>0$ (since $\sigma_2(W) \leq 0$);  we refer the reader to
\cite[Section 2.5]{Va:13} for a proof.  Hence, since $\dot
t=-2\lambda (1-\C^2/r^2)$ along the flow, and this has the same sign as
$-\lambda$ in the hyperbolic region, on every component of the
characteristic set, regularity propagates \emph{forward in time} {for
the operator $P.$  Of course, when we consider $P^*,$ the sign of the
$W$ term is reversed, and the reverse phenomenon therefore takes place:
regularity propagates \emph{backward in time} instead.  We will use
both of these propagation results in what follows: that for $P^*$ to
obtain solvability of the equation $Pu=f$ and that for $P$ to
constrain the wavefront set of the resulting distribution $u.$}

\begin{lemma}\label{lemma:nonull}
If $u \in \E'_k(\RR^3) \cap L^2(\RR^3)$ and $P^{(*)}u=0$ then $\supp u
\subset \{r\geq R\}.$
\end{lemma}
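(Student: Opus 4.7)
The plan is to exploit the support property of $W$ to reduce the hypothesis to the pure equation $(\Box_k+m^2)u=0$ inside $\{r<R\}$, and then to kill $u$ there by combining elliptic unique continuation with Lemma~\ref{lemma:ellipticvanishing}. By construction the Schwartz kernel of $W$ is supported in $\{r>R\}$ in both factors; since the symbol $w$ is real so that the Weyl quantization $W_0$ is formally self-adjoint (up to lower-order effects which are still supported where $\widetilde\chi$ is), the same holds for $W^*$. Hence both $Wu$ and $W^*u$ vanish identically on $\{r\leq R\}$, and the hypothesis $P^{(*)}u=0$ yields $(\Box_k+m^2)u=0$ distributionally on $\{r<R\}$.

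On the open strip $\{0<r<\C\}$ the operator $\Box_k+m^2$ is (scalar) elliptic in $(t,r)$ with smooth, indeed real-analytic, coefficients, so by elliptic regularity $u$ is smooth, in fact real-analytic, there. Since $u\in\E'_k$ is compactly supported in $\RR^3$, we have $u\equiv 0$ on $\{t>T\}$ for $T$ large; this is a nonempty open subset of the connected set $\{0<r<\C\}=\RR_t\times\{0<|x|<\C\}$, so Aronszajn's strong unique continuation theorem (or just analytic continuation, given the analyticity of coefficients) forces $u\equiv 0$ throughout $\{0<r<\C\}$. Lemma~\ref{lemma:ellipticvanishing}, applied with $I=(0,R)$, then propagates the vanishing across the Tricomi surface $r=\C$ to give $u\equiv 0$ on the strip $\{0<r<R\}$.

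Since $u\in L^2(\RR^3)$ and the axis $\{r=0\}$ has measure zero in $\RR^3$, the distributional vanishing of $u$ on $\{0<r<R\}$ upgrades to $u=0$ as an $L^2$ function on $\{r<R\}$, giving $\supp u\subset\{r\geq R\}$ as required. The only potentially delicate step is the elliptic unique continuation, but since the coefficients are real-analytic away from $r=0$ this step reduces to the classical analytic continuation principle on the connected domain $\{0<r<\C\}$; no Carleman estimate or subtle regularity hypothesis on the coefficients needs to be verified.
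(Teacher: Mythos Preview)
Your proof is correct and follows essentially the same route as the paper's: reduce to $(\Box_k+m^2)u=0$ on $\{r<R\}$ via the support property of $W$, use elliptic unique continuation on $\{0<r<\C\}$ starting from the vanishing of $u$ at large $|t|$, and then invoke Lemma~\ref{lemma:ellipticvanishing} to push the vanishing out to $r<R$. The paper's proof is a terse two-sentence version of exactly this argument; you have simply filled in the details (the reduction via $\supp\kappa(W)$, the analyticity of coefficients making the unique continuation step classical, and the measure-zero axis), all of which are sound.
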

\begin{proof}
This follows from our uniqueness results (cf.\ \cite{Ba:02}), since
vanishing on an open subset of the elliptic set $\{r \in (0, \C)\}$ implies global
vanishing on this set, by unique continuation for elliptic operators, and in turn implies
global vanishing on $\{ r \in (0, R)\}$ by
Lemma~\ref{lemma:ellipticvanishing}.
\end{proof}
Of course for $r\geq R,$ since $P^{(*)} \neq \Box_k+m^2,$ we cannot conclude vanishing
anymore.

\begin{lemma}\label{lemma:Kprime}
  Given $K\subset \RR^3$ compact and $S^1_\varphi$-invariant, and $R\gg 0$ chosen as above, there
  exists $K'\supset K,$ also compact and invariant, such that for any
  radial null bicharacteristic $\gamma(s)\subset\Sigma$ of
  $\sigma_2(\Box_k)=\Re \sigma_2(P^{(*)})$ with
  $\pi\gamma(0) \in K,$ there exists $s_0$ such that 
  \begin{enumerate}
  \item $\pi\gamma(s_0) \in K'$
  \item $t(\gamma(s_0))<t(\gamma(0)),$
    \item $r(\gamma(s_0))>R+1$
\item $\sgn d (r\circ \gamma) /ds\rvert_{s_0}=-\sgn d (t\circ \gamma)/ds\rvert_{s_0}.$\end{enumerate}
  \end{lemma}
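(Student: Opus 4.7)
The plan is to use the explicit bicharacteristic formulas derived in Remark~\ref{remark:flow}. Since $\dot\lambda=-\pa_t\sigma_2(\Box_k)=0$ and $\lambda\neq 0$ on $\Sigma$, the sign of $\lambda$ is invariant along any bicharacteristic $\gamma$. The formula $r(s)^2=\C^2+(2\lambda s+c)^2$ shows that $r$ attains its unique minimum $\C$ at $s_*=-c/(2\lambda)$ and increases monotonically to $\infty$ on each of $(-\infty,s_*]$ and $[s_*,\infty)$. Moreover, $\varphi$ and $\eta$ are constant along the flow since $\sigma_2(\Box_k)$ is independent of both.

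I would then split into cases according to $\sgn\lambda$. For $\lambda>0$, define $s_0$ to be the smallest $s>\max(s_*,0)$ with $r(s)=R+2$ (which exists since $r(s)\to\infty$). Then $s_0>s_*$ forces $\xi(s_0)=\dot r(s_0)/2>0$, while $r(s_0)>\C$ gives $\dot t(s_0)=-2\lambda(1-\C^2/r(s_0)^2)<0$, establishing (4). Condition (3) is automatic, and condition (2) follows because $\dot t\leq 0$ throughout with strict inequality away from the single point $s_*$. The case $\lambda<0$ is entirely symmetric: take $s_0$ to be the largest $s<\min(s_*,0)$ with $r(s)=R+2$, and rerun the sign bookkeeping to verify (2)--(4).

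For the compactness of $K'$, since $\varphi$ is constant along the flow and $r(\gamma(s_0))=R+2$, the only substantive task is to bound $|t(\gamma(s_0))-t(\gamma(0))|$ uniformly in the initial condition. From $(2\lambda s_0+c)^2=(R+2)^2-\C^2$ together with $c^2=r(\gamma(0))^2-\C^2$, one obtains $|\lambda s_0|\leq\tfrac{1}{2}\bigl(|c|+\sqrt{(R+2)^2-\C^2}\bigr)$; since $|\dot t|\leq 2|\lambda|$ on $\Sigma$, the $t$-displacement is bounded by $2|\lambda s_0|$, uniformly in $\gamma$ by compactness of $K$. Taking $K'$ to be any $S^1_\varphi$-invariant compact set containing $\pi(K)$ together with the slab of points at radius $R+2$ and the allowed $t$-range then suffices.

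The main obstacle is the sign bookkeeping: pairing the direction of flow with $\sgn\lambda$ so that $t$ decreases while $\dot r$ acquires the sign required by (4). The mild degeneracy at $s_*$, where $\dot t=0$ and the base projection has a corner, is harmless since it is isolated and $\dot t$ is strictly of one sign on either side.
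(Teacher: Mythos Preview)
Your proof is correct and follows the same approach as the paper, which likewise extracts the lemma directly from the explicit flow formulas of Remark~\ref{remark:flow}. The paper's version is terser---it simply takes $K'=[0,R+2]_r\times[-T,T]_t\times S^1_\varphi$ for $T$ sufficiently large without computing an explicit bound on the $t$-displacement---whereas you make the case split on $\sgn\lambda$ and the choice of $s_0$ explicit; the extra bookkeeping is not needed but does no harm.
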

That is to say, every null bicharacteristic starting in $K$ hits the
elliptic set of $W$ in backward time, within the set $K',$ and does so
at ``incoming points'' where $dr/dt<0.$  (See Figure~\ref{fig:1}.)   The lemma
follows directly from Remark~\ref{remark:flow}, since for one choice
of the sign, as the
flow parameter $s \to \pm\infty$ we have $r \to +\infty,$ $t\to-\infty,$
with both functions monotone.  Hence we may simply take $K'=[0, R+2]_r \times
[-T,T]_t \times S^1_\varphi$ with $T$ sufficiently large that every
bicharacteristic escapes to $r>R+1$ in backward time no less than $-T.$

\begin{figure}
  \begin{center}
    \begin{tikzpicture}[scale=2.5]
      \def\b{-2};
    \draw[thick,->](-1,\b)--(-1,1);
    \draw[thick,->](-1,0)--(1,0);
    \draw[blue,thick](0.8,\b)--(0.8,1);
        \draw[blue,thick](-0.9,\b)--(-0.9,1);
    \node[align=left] at (1.1, 0){$r$};
    \node[align=left] at (-1.1, 1.05){$t$};
    \node[color=blue, align=left] at (1.3,0.7){$r=R+1$};
        \node[color=blue, align=left] at (-0.6,0.7){$r=\C$};
    \filldraw[opacity=0.3, color=red] (-1,-0.4) -- (-0.4, -0.4) --
    (-0.4,0.4) -- (-1,0.4);
    \node at (-0.6,0.2){$K$};
        \filldraw[opacity=0.3, color=yellow] (-1,\b+0.1) -- (1.2,
        \b+0.1) -- (1.2,0.5) -- (-1,0.5);
            \node at (0,0.5*\b-.4){$K'$};
        \draw[densely dashed] (-0.8, -0.2) .. controls  (0, -0.2) and (0.9, \b+0.4) .. (1.1, \b+0.2);
\end{tikzpicture}
\end{center}
\caption{\label{fig:1} The sets $K$ and $K'$ with a sample (projected) bicharacteristic with one
  end in $K$ arriving in backward time in the set $K'$ inside $r>R+1$
  (dashed curve).}
\end{figure}
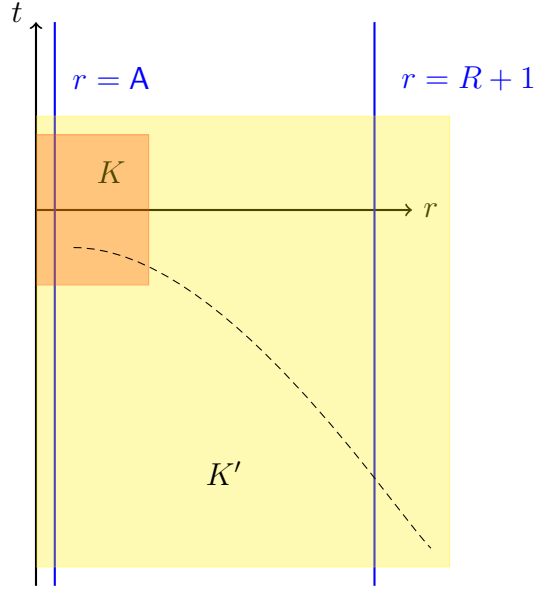

We additionally need an elliptic estimate valid down to $r=0$: for $u$
supported in $\{\ep\leq r \leq \C-\ep\},$ standard elliptic estimates
apply, but we will need uniformity down to $r=0$ as well.  We begin
with a coercivity estimate.

\begin{lemma}\label{lemma:ellipticestimate}
  For all $\phi\in \D_k$
supported in $\{r \in (0, \C/4)\},$ 
\begin{equation}\label{pairing} 
  \norm{\phi}^2_{\hilb_k^1} \lesssim
  \norm{\phi}^2_{L^2} + \ang{P^{(*)} \phi, \phi}.
\end{equation}
    \end{lemma}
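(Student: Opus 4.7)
The plan is as follows. Since $\supp\phi\subset\{r<\C/4\}\subset\{r<R\}$ and the Schwartz kernel of $W$ has both projections contained in $\{r>R\}$, I first note that $W\phi=0$, so $\ang{P^{(*)}\phi,\phi}=\ang{(\Box_k+m^2)\phi,\phi}$. Integrating by parts exactly as in the proof of the preceding mapping lemma yields the identity
$$
\ang{P^{(*)}\phi,\phi}=\norm{r^{-1}(\C\pa_t+ik)\phi}^2-\norm{\pa_t\phi}^2+\norm{\pa_r\phi}^2+m^2\norm{\phi}^2,
$$
which is real since $\Box_k+m^2$ is formally self-adjoint. Because $\norm{\phi}_{\hilb_k^1}^2$ is the sum of $\norm{\phi}^2$ and the squared norms of $\pa_t\phi$, $\pa_r\phi$, and $r^{-1}(\C\pa_t+ik)\phi$, the task reduces to absorbing the wrong-sign term $-\norm{\pa_t\phi}^2$ into the remaining positive contributions.

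The key estimate I would establish is that, for $\phi\in\D_k$ supported in $\{r<\C/4\}$,
$$
\norm{\pa_t\phi}^2\leq\tfrac{1}{4}\norm{r^{-1}(\C\pa_t+ik)\phi}^2+\tfrac{4k^2}{\C^2}\norm{\phi}^2.
$$
This I plan to derive from the elementary pointwise inequality $\tau^2\leq\tfrac{4}{\C^2}(\C\tau+k)^2+\tfrac{4k^2}{\C^2}$ for real $\tau$, proved by a two-case split on whether $|\C\tau+k|\geq\C|\tau|/2$. (The additive $4k^2/\C^2$ correction is unavoidable because $\C\tau+k$ vanishes at $\tau=-k/\C$, i.e., $\pa_t$ is not controlled by $(\C\pa_t+ik)$ alone near that frequency.) Taking the Fourier transform in $t$ and applying Plancherel pointwise in the spatial variable globalizes the pointwise bound to $\norm{\pa_t\phi}^2\leq\tfrac{4}{\C^2}\norm{(\C\pa_t+ik)\phi}^2+\tfrac{4k^2}{\C^2}\norm{\phi}^2$. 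The support condition $r<\C/4$ gives $r^2\leq\C^2/16$ on $\supp\phi$, hence $\norm{(\C\pa_t+ik)\phi}^2\leq(\C^2/16)\norm{r^{-1}(\C\pa_t+ik)\phi}^2$, and combining yields the key estimate.

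Substituting the key estimate into the identity then produces
$$
\ang{P^{(*)}\phi,\phi}\geq\tfrac{3}{4}\norm{r^{-1}(\C\pa_t+ik)\phi}^2+\norm{\pa_r\phi}^2+\bigl(m^2-\tfrac{4k^2}{\C^2}\bigr)\norm{\phi}^2,
$$
which controls $\norm{r^{-1}(\C\pa_t+ik)\phi}^2$, $\norm{\pa_r\phi}^2$, and $\norm{\phi}^2$ by $\ang{P^{(*)}\phi,\phi}+C\norm{\phi}^2$ (after absorbing the possibly-negative $(m^2-4k^2/\C^2)\norm{\phi}^2$ into the $\norm{\phi}^2$ term on the right of \eqref{pairing}). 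The remaining component $\norm{\pa_t\phi}^2$ of $\norm{\phi}^2_{\hilb_k^1}$ is then recovered by one more application of the key estimate. The main obstacle throughout is the wrong-sign $-\norm{\pa_t\phi}^2$ coming from the hyperbolic character of the $\pa_t^2$ term in $\Box_k$; restricting to $r<\C/4$ — well inside the elliptic region $r<\C$ — is precisely what produces the absorption factor $4r_{\max}^2/\C^2=1/4<1$.
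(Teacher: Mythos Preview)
Your argument is correct and follows essentially the same route as the paper: both start from the integration-by-parts identity
\[
\ang{P^{(*)}\phi,\phi}=\norm{r^{-1}(\C\pa_t+ik)\phi}^2-\norm{\pa_t\phi}^2+\norm{\pa_r\phi}^2+m^2\norm{\phi}^2
\]
and then use the support condition $r<\C/4$ to absorb the wrong-sign $-\norm{\pa_t\phi}^2$ into a fraction of $\norm{r^{-1}(\C\pa_t+ik)\phi}^2$ via an elementary Fourier-side inequality. The only cosmetic difference is that the paper completes the square to obtain the slightly sharper bound $\tfrac14 r^{-2}(\C\lambda+k)^2-\lambda^2\geq \lambda^2-4k^2/\C^2$, which yields the $\norm{\pa_t\phi}^2$ piece of the $\hilb_k^1$ norm directly, whereas your two-case argument gives the weaker $\tau^2\leq\tfrac14 r^{-2}(\C\tau+k)^2+4k^2/\C^2$ and you recover $\norm{\pa_t\phi}^2$ by a second application of the same estimate; either way the conclusion follows.
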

    
  \begin{proof}
Pairing $P^{(*)} \phi$ with $\phi$ gives
    $$
 \ang{P^{(*)}\phi,\phi}=\norm{r^{-1}(\C \pa_t +ik) \phi}^2-\norm{\pa_t
   \phi}^2+ \norm{\pa_r\phi}^2+m^2 \norm{\phi}^2.
$$
Since for $r<\C/4$ \begin{align*} \frac 14 r^{-2}(\C\lambda+k)^2-\lambda^2 &\geq \big(2 \lambda+\frac{2k}\C\big)^2-\lambda^2\\ &= \lambda^2 +2\lambda^2 +\frac {8 \lambda k }{\C} + \frac{4 k^2}{\C^2} \\ &=\lambda^2 +2 \big( (\lambda+\frac{2k}{\C})^2 -\frac{2k^2}{\C^2} \big),
                  \end{align*}
                  Fourier transforming $t \to \lambda$ and using Plancherel gives the elliptic estimate
                  $$
\frac 14 \norm{r^{-1}(\C \pa_t +ik) \phi}^2-\norm{\pa_t \phi}^2 \geq \norm{\pa_t\phi}^2-\frac{4k^2}{\C^2} \norm{\phi}^2.
                  $$
                  Thus
$$
\ang{P^{(*)} \phi,\phi} \geq \frac 34 \norm{r^{-1}(\C \pa_t+ik) \phi}^2+
                    \norm{\pa_t \phi}^2+\norm{\pa_r\phi}^2 +
                    \norm{\phi}^2-\big( 1+
                    \frac{4k^2}{\C^2})\norm{\phi}^2
$$
and we have obtained the desired estimate.
\end{proof}

  \begin{lemma}\label{lemma:elliptic}
For $u \in \hilb_k^1,$ supported in $r<\C/4,$
\begin{equation}\label{ellipticestimate}
\norm{\phi}_{\hilb_k^1}\leq C \norm{\phi}_{L^2} + C\norm{P^{(*)} \phi}_{\hilb_k^{-1}}.
\end{equation}
  \end{lemma}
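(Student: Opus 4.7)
The plan is to upgrade the coercivity estimate of Lemma~\ref{lemma:ellipticestimate} to the desired estimate by a standard absorption argument, treating $\ang{P^{(*)}\phi,\phi}$ via the $\hilb_k^1$--$\hilb_k^{-1}$ duality, and then to pass from test functions to general $u \in \hilb_k^1$ by density.

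First, I would fix $\phi \in \D_k$ supported in $\{r < \C/4\}$ and estimate the pairing on the right of Lemma~\ref{lemma:ellipticestimate} using duality:
\begin{equation*}
\lvert \ang{P^{(*)}\phi,\phi} \rvert \leq \norm{P^{(*)}\phi}_{\hilb_k^{-1}} \norm{\phi}_{\hilb_k^1},
\end{equation*}
which is legitimate since the previous mapping lemma places $P^{(*)}\phi \in \hilb_k^{-1}$. Applying Young's inequality $ab \le \epsilon a^2 + (4\epsilon)^{-1} b^2$ with $\epsilon$ small yields
\begin{equation*}
\lvert \ang{P^{(*)}\phi,\phi} \rvert \leq \epsilon \norm{\phi}_{\hilb_k^1}^2 + \frac{1}{4\epsilon} \norm{P^{(*)}\phi}_{\hilb_k^{-1}}^2.
\end{equation*}
Choosing $\epsilon$ smaller than half the implicit constant in Lemma~\ref{lemma:ellipticestimate}, the $\epsilon \norm{\phi}_{\hilb_k^1}^2$ term can be absorbed into the left-hand side of that estimate, and taking square roots delivers the inequality \eqref{ellipticestimate} for all such $\phi \in \D_k$.

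To extend to $u \in \hilb_k^1$ supported in $\{r < \C/4\}$, I would use that $\hilb_k^1$ is by definition the closure of $\D_k$ in its norm. Pick a smooth cutoff $\kappa(r)$ with $\kappa = 1$ on a slightly smaller set containing $\supp u$ and $\supp \kappa \subset \{r < \C/4\}$; multiplication by $\kappa$ is bounded on $\hilb_k^1$ (it commutes with the singular factor $r^{-1}(\C\pa_t + ik)$ and the other norms are of standard $H^1$ type). If $\psi_n \in \D_k$ with $\psi_n \to u$ in $\hilb_k^1$, then $\kappa \psi_n \in \D_k$ is supported in $\{r < \C/4\}$ and $\kappa \psi_n \to \kappa u = u$ in $\hilb_k^1$. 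Applying the estimate to $\kappa \psi_n$ and passing to the limit (using the boundedness of $P^{(*)}: \hilb_k^1 \to \hilb_k^{-1}$ and the continuity of the $L^2$ norm on $\hilb_k^1$) yields \eqref{ellipticestimate} for $u$.

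I do not anticipate a serious obstacle here: the coercivity of Lemma~\ref{lemma:ellipticestimate} already contains all the delicate analysis (exploiting the positivity of $r^{-1}(\C\pa_t + ik)$ over $\pa_t$ near $r=0$), so what remains is the routine duality--absorption argument plus approximation. The only item requiring a moment's care is checking that the cutoff-and-approximate scheme stays within the class of test functions supported in $\{r < \C/4\}$ away from $r=0$, but this is immediate from the definition of $\D_k$.
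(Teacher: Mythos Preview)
Your proposal is correct and follows essentially the same approach as the paper: the paper's proof is the one-line ``Apply Cauchy--Schwarz to the estimate \eqref{pairing} and use the density of test functions,'' and you have simply spelled out the duality bound, the absorption step, and the density argument in detail. The extra care you take with the cutoff $\kappa$ to keep the approximating sequence supported in $\{r<\C/4\}$ is not strictly needed but is a harmless elaboration.
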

  \begin{proof}
Apply Cauchy-Schwarz to the estimate
\eqref{pairing} and use the density of test functions.
                \end{proof}
    \begin{remark}
The hypothesis that $\phi \in \hilb_k^1$ may \emph{not} be dispensed
with in the preceding lemma.  One might hope that a weaker a priori assumption, such as
$\phi \in L^2,$ $P^* \phi \in \hilb_k^{-1},$ might \emph{imply} $\phi
\in \hilb_k^1,$ but this is not so.  For instance, as noted by Carrillo
\cite{Ca:21}, take $m=0$ and fix $\lambda$ such that $\C\lambda+k \in (-1,0)$ and consider
$$
\phi(t,r,\varphi) = \chi(r) e^{i\lambda t}e^{ik\varphi} J_{\C \lambda+k} (\lambda r),
$$
with $\chi(r)$ a cutoff function equal to $1$ on $[0,\C/2) $ and supported in
$r< \C.$ This satisfies
$$
P^* \phi = [\Box_k, \chi]e^{i\lambda t} e^{ik\varphi} J_{\C \lambda+k} (\lambda r),
$$
which is in $\CI(\RR^3),$ supported away from $r=0,$ hence certainly
$P^* \phi$ locally lies in ${\hilb_k^{-1}}.$  Owing to our choice of
$\C\lambda+k,$ we have moreover arranged that $\phi \in L^2_{\loc}.$ But $\pa_r\phi$
is not in $L^2_{\loc}$ near $r=0.$  Thus, local finiteness of the RHS
of the
estimate \eqref{ellipticestimate} certainly cannot guarantee that $u \in \hilb^1_{k,\loc}.$  To see the
global failure, we instead take a superposition of these examples.
Take $\C>0$ for simplicity of notation and construct
$$
\phi(t,r,\varphi)=e^{ik\varphi} \int \zeta(\lambda) \chi(r) e^{i\lambda t} J_{\C
  \lambda+k} (\lambda r) \, d\lambda,
$$
where $\zeta(\lambda)$ is a smooth function compactly supported in
$$
-\C^{-1}(3/4+k)<\lambda<-\C^{-1} (1/4+k),
$$
i.e., such that $$\nu \equiv \C\lambda+k\in (-3/4,-1/4)\text{ for }\lambda \in \supp
\zeta.$$  (We will further specify $\zeta$ below.)  Now again $P^*\phi$
is compactly supported in $r>0$ but this time it is also Schwartz in
$t,$ hence $P^* \phi \in L^2.$

On the other hand, applying Taylor's theorem to the family of analytic
functions of $r$ given by
$$
r^{-\nu}J_{\nu} (\lambda r),\ \nu \in (-3/4,-1/4)
$$
yields
$$
J_{\nu} (\lambda r)=f_0(\nu)r^\nu+O(r)
$$
with the remainder term estimated uniformly for $\nu \in (-3/4,1/4)$;
here $f_0(\nu)$ {is a smooth
(indeed, locally analytic) function of $\nu=\C\lambda+k,$
nonvanishing for $\nu\notin \ZZ$ (and $\lambda \neq 0$).}  Likewise
$$
\pa_r J_{\nu} (\lambda r)=f_1(\nu)r^{\nu-1}+O(1)
$$
for some other locally analytic function $f_1(\nu)$ {nonvanishing on
$\nu \in \RR\backslash \ZZ,\ \lambda \neq 0;$ again we have
uniform remainder bounds}. Hence if we choose $\zeta(\lambda)$ so that
the product
$$
\zeta(\lambda) f_1(\C\lambda+k) 
$$
is a nonnegative cutoff function equal to $1$ in
$-\C^{-1}(2/3+k)<\lambda<-\C^{-1} (1/3+k),$
then applying Plancherel in $\lambda$ shows
that on the one hand, $\phi \in L^2.$ On the other hand, it also
yields
$$
\norm{\pa_r \phi}^2 \geq \C^{-1} \int_0^{\C/2} \int_{-2/3}^{-1/3} (r^{2(\nu-1)}+O(1)) \, d\nu\,r
\, dr,
$$
which diverges.
\end{remark}

\section{Proof of the Theorem}                                                                                                                                                                  

We now follow an approach similar to that of Payne \cite{Pa:98} to
show, using Duistermaat--H\"ormander style microlocal energy
estimates, that forward parametrices exist semiglobally. 
In particular, we now describe the crucial propagation estimate; from here our argument hews closely to 
\cite[Theorem 6.3.1]{DuHo:72}.

In what follows, the constant $C$ will be allowed to change from line
to line.
\begin{proposition} For $\phi \in\hilb_k^1 \cap \E'(K'),$
\begin{equation}\label{fredholm1}
\norm{\phi}_{\hilb_k^1} \leq C \norm{P^* \phi}_{L^2}+ C \norm{\phi}_{L^2}.
\end{equation}
\end{proposition}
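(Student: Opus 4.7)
My plan is to prove the inequality via a microlocal partition of $T^*K'$ (restricted to $\{\eta=0\}$, since $\WF\phi\subset\{\eta=0\}$ by $(\pa_\varphi-ik)\phi=0$), handling three regions and combining via a partition of unity: a neighborhood of $r=0$, the elliptic set of $(P^*,\pa_\varphi-ik)$ away from $r=0$, and the joint characteristic set $\Sigma\cap T^*K'$.

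Near $r=0$, I would apply Lemma~\ref{lemma:elliptic} to $\chi_0\phi$ for a cutoff $\chi_0(r)$ equal to $1$ on $[0,\C/8]$ and supported in $[0,\C/4)$. Expanding $P^*(\chi_0\phi)=\chi_0 P^*\phi+[P^*,\chi_0]\phi$, the commutator is supported in $\{\C/8\leq r\leq\C/4\}$, bounded away from $r=0$, so (via the local equivalence of $\hilb_k^1$ and $H^1$ there) its $\hilb_k^{-1}$-norm is controlled by the $H^1$-estimates in the other two regions. In the elliptic region off $\Sigma$ (and away from $r=0$), the standard pseudodifferential elliptic estimate for the elliptic pair $(P^*,\pa_\varphi-ik)$ yields microlocal $H^1$-control in terms of $\|P^*\phi\|_{L^2}+\|\phi\|_{L^2}$.

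The heart of the argument is the propagation estimate on $\Sigma\cap T^*K'$. I would invoke the Duistermaat--H\"ormander propagation theorem for operators with complex principal symbol: since $\Im\sigma_2(P^*)=\sigma_2(W)$ has the appropriate sign, H\"ormander's theorem propagates regularity of $\phi$ \emph{backward in time} along null bicharacteristics of $\sigma_2(\Box_k+m^2)=\Re\sigma_2(P^*)$, as set up just before the Proposition. For any $q_0\in\Sigma\cap T^*K'$, by Remark~\ref{remark:flow} the bicharacteristic $\gamma$ through $q_0$ is complete with $r\to\infty$ monotonically and $t$ strictly monotone at the flow endpoints; hence forward in time from $q_0$, $\gamma$ exits $K'=[0,R+2]_r\times[-T,T]_t\times S^1_\varphi$ through $\{r>R+2\}$ at some finite parameter, landing in the outgoing portion $\Sigma_+$. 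Since $\phi\in\E'(K')$ is compactly supported, $\phi\equiv 0$ past this exit. Choosing $A_1$ microlocalized near a point on $\gamma$ just beyond the exit, with Schwartz kernel supported in $\{r>R+3/2\}$, forces $A_1\phi\equiv 0$, and the propagation estimate then reads
$$
\|A_0\phi\|_{H^1}\leq C\|A_1 \phi\|_{H^1} + C\|GP^*\phi\|_{L^2}+C\|\phi\|_{L^2} = C\|GP^*\phi\|_{L^2}+C\|\phi\|_{L^2}
$$
for $A_0$ microlocalized near $q_0$ and $G$ microlocalized along the bicharacteristic segment between. A finite microlocal partition of unity over the compact set $\Sigma\cap T^*K'$ then combines the three estimates to give \eqref{fredholm1}.

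I expect the main obstacle to be ensuring that the propagation argument remains uniform through the singular locus $r=\C$ of the projected bicharacteristics. Remark~\ref{remark:flow} is crucial here: bicharacteristics are smooth in the cosphere bundle with $\dot\xi\neq 0$ at $r=\C$, so these are not radial points and H\"ormander's theorem applies uniformly across this locus. A secondary point is the uniform-in-$q_0$ choice of forward-time exit parameter, which requires a forward-in-time analog of Lemma~\ref{lemma:Kprime}; this follows from the same monotone-$r$ dynamics after possibly enlarging $K'$ in the $t$-direction.
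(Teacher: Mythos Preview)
Your proof is correct and follows essentially the same route as the paper: cut off near $r=0$ and apply Lemma~\ref{lemma:elliptic}, then handle $(1-\chi_0)\phi$ by combining the standard elliptic estimate off $\Sigma$ with backward-in-time propagation for $P^*$ along bicharacteristics, using the compact support of $\phi$ in $K'$ to supply regularity at the forward-in-time end. The paper streamlines two of your steps: since $[P^*,\chi_0]$ is a first-order differential operator supported away from $r=0$, one has $\|[P^*,\chi_0]\phi\|_{\hilb_k^{-1}}\lesssim\|\phi\|_{L^2}$ directly (no appeal to the other regions' $H^1$ bounds is needed), and no enlargement of $K'$ is required, since the bicharacteristic only needs to exit $\supp\phi\subset K'$ in forward time, which it does as $t\to+\infty$.
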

\begin{proof}
  Let $\chi(r)$ be a cutoff function equal to $1$ on $r<\C/8$ and supported in $(-\infty, \C/4).$ Then applying Lemma~\ref{lemma:elliptic} to $\chi \phi$ gives
  $$
\norm{\chi\phi}_{\hilb_k^1}\leq C \norm{\phi}_{L^2} + C\norm{\chi P^* \phi}_{\hilb_k^{-1}} + C \norm{[P^*,\chi]\phi}_{\hilb_k^{-1}}.
  $$
  Since $[P^*, \chi]$ is an operator of order $1$ with smooth
  coefficients, supported away from $r=0$ (hence $\hilb_k^{-1}$ locally agrees
  with $H^{-1}$)
  $$
 \norm{[P^*,\chi]\phi}_{\hilb_k^{-1}} \lesssim \norm{\phi}_{L^2}
 $$
 and we conclude a fortiori (since $L^2 \subset \hilb_k^{-1}$) that
 $$
\norm{\chi\phi}_{\hilb_k^1} \leq C \norm{P^* \phi}_{L^2}+ C \norm{\phi}_{L^2}.
$$
It now suffices to additionally show that
\begin{equation}\label{oneminuschi}
\norm{(1-\chi)\phi}_{H^1} \leq C_1 \norm{P^* \phi}_{L^2}+ C_2 \norm{\phi}_{L^2},
\end{equation}
where we have switched to using the ordinary $H^1$ Sobolev norm, since it
agrees with the $\hilb_k^1$ norm on the hyperbolic region.
To show \eqref{oneminuschi}, let $q_0=(t_0,r_0,\lambda_0,\xi_0)
\in\pi^{-1}(\supp(1-\chi) \phi) \subset S^*X.$ If $q_0 \in
\elliptic(P^*),$ then for $A_{q_0}\in \Psi^0_c(X)$ microsupported
sufficiently close to $q_0$ (chosen, for use later, with nonnegative principal symbol)
$$
\norm{A_{q_0} \phi}_{H^1} \leq C \norm{\phi}_{L^2}+ C \norm{P^* \phi}_{H^{-1}}
$$
by standard elliptic estimates.  (This estimate is stronger than
needed, owing to the $H^{-1}$ norm on the RHS, but the weaker
estimate in the statement of the proposition will be as good as we can obtain on the hyperbolic set.)   On the other hand, if $q_0$ is in the
characteristic set $\Sigma,$ then either $\lambda >0$ or
$\lambda<0$ along the whole null bicharacteristic $\gamma$ of $\Re
\sigma_2(P^*)$ through
$q_0,$ since $\lambda=\lambda_0$ is conserved under the flow (owing to
$t$-independence of the symbol), {hence by our choice of the sign of
$\sigma_2(W)$ we have propagation of regularity \emph{backwards in
  time} along null bicharacteristics of $\Re \sigma_2(P^*)$ by the results of \cite[Section
2.5]{Va:13}. 
  Since the flow eventually leaves $K'\supset \supp \phi$ as $t
  \to +\infty,$}
$$
\norm{A_{q_0}  \phi}_{H^1} \leq C \norm{P^* \phi}_{L^2}+ C\norm{\phi}_{L^2}
$$
(cf.\ \cite[Equation 2.18]{Va:13}).
Piecing together these estimates for a finite cover of $K'\cap \supp (1-\chi)$ by
$\elliptic
A_{q_1},\dots \elliptic A_{q_N}$ and invoking elliptic regularity for
$\sum A_{q_j}$ yields the estimate \eqref{oneminuschi}.
\end{proof}

Now we claim that the second term on the RHS of \eqref{fredholm1} can
be dropped if we restrict ourselves to a finite codimension subspace
of $\hilb_k^1\cap \E'(K')$: we let
$$
N(P^*) = \{u \in \hilb_k^1 \cap \E'(K'): P^* u=0\}.
$$
The space $N(P^*)$ is finite dimensional, since \eqref{fredholm1}
implies that on this space $\norm{\phi}_{\hilb_k^1} \lesssim
\norm{\phi}_{L^2},$ hence the unit ball of $N(P^*)$ in the $L^2$
topology is compact, by Lemma~\ref{lemma:compact}.
Let $N(P^*)^\perp$ denote the orthocomplement of this
finite-dimensional space in $L^2_k.$
\begin{lemma}
\begin{equation}\label{fredholm2}
\norm{\phi}_{\hilb_k^1} \leq C \norm{P^* \phi}_{L^2},\quad \phi \in
\hilb_k^1 \cap \E'(K') \cap N(P^*)^\perp.
\end{equation}
\end{lemma}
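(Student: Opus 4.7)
The plan is to upgrade the estimate \eqref{fredholm1} by a standard Fredholm/compactness argument, exploiting the finite-dimensionality of $N(P^*)$ and the compact embedding from Lemma~\ref{lemma:compact}. I argue by contradiction: suppose no uniform constant works, so there exists a sequence
$$\phi_n \in \hilb_k^1 \cap \E'(K') \cap N(P^*)^\perp$$
with $\norm{\phi_n}_{\hilb_k^1}=1$ and $\norm{P^*\phi_n}_{L^2}\to 0$. Applied to $\phi_n$ alone, \eqref{fredholm1} then forces $\norm{\phi_n}_{L^2}$ to be bounded away from zero (otherwise $1\leq C\cdot 0+C\cdot 0$ in the limit).

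Next I would extract a convergent subsequence. Since $\phi_n$ is bounded in $\hilb_k^1$ and uniformly supported in the compact set $K'$, Lemma~\ref{lemma:compact} yields (after passing to a subsequence) $\phi_n\to\phi$ in $L^2_k$. Then I apply \eqref{fredholm1} to the differences $\phi_n-\phi_m\in \hilb_k^1 \cap \E'(K')\cap N(P^*)^\perp$ (the space is linear, so this difference lies in it) to obtain
$$\norm{\phi_n-\phi_m}_{\hilb_k^1}\leq C\norm{P^*\phi_n-P^*\phi_m}_{L^2}+C\norm{\phi_n-\phi_m}_{L^2},$$
and both terms on the right go to zero. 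Hence $\phi_n$ is Cauchy in $\hilb_k^1$, so $\phi_n\to\phi$ in $\hilb_k^1$ with $\norm{\phi}_{\hilb_k^1}=1$ and $\supp\phi\subset K'$.

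Finally, I would pass to the limit in the equation and identify the limit. Since $P^*\colon\hilb_k^1\to\hilb_k^{-1}$ is continuous, $P^*\phi_n\to P^*\phi$ in $\hilb_k^{-1}$; but also $P^*\phi_n\to 0$ in $L^2$, so $P^*\phi=0$ as a distribution (in the weak sense of Definition~\ref{definition:weaksol}, which is consistent with the $\hilb_k^{-1}$ pairing). Thus $\phi\in N(P^*)$. On the other hand, $N(P^*)^\perp$ is closed in $L^2_k$ and $\phi_n\to\phi$ in $L^2_k$, so $\phi\in N(P^*)^\perp$. Therefore $\phi=0$, contradicting $\norm{\phi}_{\hilb_k^1}=1$.

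The main obstacle is subtle and pertains to the first step, namely ensuring that $N(P^*)$ is indeed finite dimensional so that the orthocomplement makes sense and the limit argument closes. This has already been arranged in the text preceding the lemma: \eqref{fredholm1} restricted to $N(P^*)$ says the $\hilb_k^1$- and $L^2$-norms are equivalent there, and Lemma~\ref{lemma:compact} then forces the $L^2$ unit ball of $N(P^*)$ to be compact, hence $\dim N(P^*)<\infty$. Everything else in the proof is routine once the compact embedding and the a priori estimate \eqref{fredholm1} are in hand.
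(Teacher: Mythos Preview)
Your proof is correct and follows essentially the same contradiction-plus-compactness strategy as the paper: normalize in $\hilb_k^1$, send $P^*\phi_n\to 0$ in $L^2$, extract an $L^2$-convergent subsequence via Lemma~\ref{lemma:compact}, and identify the limit as an element of $N(P^*)\cap N(P^*)^\perp=\{0\}$. The only cosmetic difference is that you upgrade to strong $\hilb_k^1$ convergence by applying \eqref{fredholm1} to the differences $\phi_n-\phi_m$, whereas the paper works with weak $\hilb_k^1$ convergence of the subsequence and invokes \eqref{fredholm1} only at the end to derive the contradiction from $\phi_j\to 0$ in $L^2$.
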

\begin{proof} If \eqref{fredholm2} did not hold, there would exist $\phi_j \in \hilb_k^1\cap N(P^*)^\perp$ with support in $K'$ such that
$$
\norm{\phi_j}_{\hilb_k^1}=1,\ \norm{P^* \phi_j}_{L^2}\to 0.
$$
Extracting a weakly convergent subsequence in $\hilb_k^1,$ hence
strongly convergent in $L^2$ by Lemma~\ref{lemma:compact}, we get
$\phi_j \to \psi \in \hilb_k^1\cap N(P^*)^\perp$ with convergence in the $L^2$ sense.
Recall that $P^*: \hilb_k^1\to \hilb_k^{-1}$ is continuous, so $P^* \psi$
is the weak limit in $\hilb_k^{-1}$ of $P^*\phi_j.$ On the other hand $P^* \phi_j
\to 0$ in $L^2,$ hence a fortiori in $\hilb_k^{-1},$
so in fact $P^* \psi=0$. Consequently, $\psi\in \hilb_k^1 \cap N(P^*)$, i.e.\
$\psi=0.$  Thus $\phi_j \to 0$ in $L^2,$ and \eqref{fredholm1} for the
sequence $\phi_j$ reads 
\begin{equation}
\norm{\phi_j}_{\hilb_k^1} \leq C \norm{P^* \phi_j}_{L^2}+ C \norm{\phi_j}_{L^2} \to 0,
\end{equation}
contradicting the assumed normalization of the LHS.\end{proof}

As a result of \eqref{fredholm2}, if $f \in N(P^*)^\perp,$ the map
$$
T\colon P^* \phi \mapsto \ang{\phi,f}
$$
is well-defined on the range of $P^*$ on the test functions
$\D_k((K')^\circ)$ considered as a subset of
$L_k^2(K')$; \eqref{fredholm2} and the dual pairing of $\hilb_k^1$ and
$\hilb_k^{-1}$ yields
$$
\abs{T P^* \phi} \lesssim \norm{P^* \phi}_{L^2}\norm{f}_{\hilb_k^{-1}}.
$$
We now extend the map to the whole of $L_k^2(K')$ by Hahn--Banach. 
The Riesz Lemma implies the existence of $u \in L_k^2(K')$ with $TP^*\phi  =\ang{P^*\phi,u},$ hence
$$
\ang{\phi,f}=\ang{\phi, Pu}
$$
for all test functions $\phi$ supported on
$(K')^\circ \backslash \{r=0\}.$ Hence $u$ solves $Pu=f$ on
$(K')^\circ$ (in the weak sense specified by
Definition~\ref{definition:weaksol}).  Of course, we were restricted to
$f \in N(P^*)^\perp$ in making this construction. By Lemma~\ref{lemma:nonull}, though, elements of
$N(P^*)$ are supported entirely in $r\geq R,$ hence if we restrict to
$\supp f \subset \{r<R_0\},$ having chosen $R>R_0$ ensures that $f \in N(P^*)^\perp$ and the solvability result
applies.

To see that $u$ has the desired wavefront properties on $K,$ we now
bring to bear Lemma~\ref{lemma:Kprime}.  Owing to our choice of the
sign of $W,$ recall that regularity propagates forward \emph{in time}
under
bicharacteristic flow (away from $\WF f$).  {Ellipticity guarantees
both $\WF u \subset \WF f \cup \Sigma$ and also $\WF u \cap
\{r>R+1\}\cap \Sigma_-=\emptyset.$ Since every
bicharacteristic passing through $K$ reaches this latter set inside
$\pi^{-1} K'$ in backward time, no point may be in $\WF u\backslash
\WF f$ whose backward-in-time flow does not hit $WF f.$  In other words,
$(\WF u\backslash \WF f)\cap \pi^{-1}K$ is contained
in the \emph{forward} flowout of $\WF f\cap \Sigma,$ as desired.}
Meanwhile, $u$ does solve our original equation $(\Box_k+m^2) u=f$ on $K,$
since $W=0$ on $K$ (recall $K \subset \{r<R\}$).

In case we have the increased regularity $f \in L_k^2,$ we conclude that
$u \in H^1_{\loc}$ by propagation of singularities in the hyperbolic
region; on the elliptic set, we can even do better if desired ($u \in
H^2_{\loc}$).

If $u_1, u_2$ are both forward solutions to $(\Box_k+m^2) u_\bullet=f,$ then
$u_1-u_2 \in \CI$ microlocally on the elliptic set of $P,$ by elliptic
regularity.  Elsewhere we obtain $u_1-u_2 \in \CI$
by propagation of singularities, since
$\WF u_\bullet \cap \{r>R+1\}\cap \Sigma_-=\emptyset$ and
$(\Box_k+m^2)(u_1-u_2)=0.$  The uniqueness assertion of the theorem follows.
\qed

  \bibliographystyle{abbrv} 
\bibliography{CosmicStrings}

\end{document}